\def\@secnumfont{\bfseries}                           %
\theoremstyle{plain} 
\newtheorem{theorem}{\bf Theorem}
\newtheorem*{theorem*}{\bf Theorem}
\theoremstyle{definition}
\begin{document}

\address{Research and Exploratory Development Department}
\address{Johns Hopkins University Applied Physics Laboratory}

\numberwithin{theorem}{section}

\title{\bf Wallis's  Formula and the Probability Integral}

\maketitle                                 

\centerline{James R. Schatz}   

\bigskip
\section{\bf Introduction}

In 1655 John Wallis proved the following remarkable  assertion, 

\begin{equation}
\lim_{n \rightarrow \infty} \  \frac{2}{1} \cdot \frac{2}{3} \cdot \frac{4}{3}  \cdot \frac{4}{5}  \cdot  \frac{6}{5}  \cdot \frac{6}{7}
 \dots  \frac{2n}{2n - 1} \cdot \frac{2n}{2n + 1} = \frac{ \pi }{2}.   \vspace*{10pt} 
 \end{equation}
Wallis's formula is used in a number of methods of computing the improper integral

\begin{equation}
\int_0^\infty e^{-x^2} \ dx = \frac{ \sqrt{\pi} }{2}.    \vspace*{10pt} 
\end{equation}
This integral is often called the probability integral because of the important role it plays in the theory of the normal distribution. 
The goal of this expository note is to consider some of the elementary methods of proving these famous formulas and to explore the relationships between them.
As a first step in understanding the close connection between these two fascinating assertions, 
we will show that any proof of either (1) or (2) automatically proves both formulas. After establishing this important result, we will review some
elementary methods of proving Wallis's formula and the probability integral formula.

\bigskip

There are a number of simple variations of Wallis's formula that are equivalent to the basic form (1). 
Two of these alternative statements are useful  in the arguments that follow. As an easy exercise, the reader might like to 
prove that each of the following two assertions is equivalent to Wallis's formula as given in (1): 

\begin{equation}
\sqrt{\pi}  = \lim_{n \rightarrow \infty}  \  \frac{1}{\sqrt{n}} \cdot \Big( \frac{2}{1} \cdot \frac{4}{3} \cdot \frac{6}{5} \dots \frac{2n}{2n - 1} \Big)
\end{equation}
\begin{equation}
\frac{\sqrt{\pi}}{2} = \lim_{n \rightarrow \infty} \sqrt{n} \ \Big(  \frac{2}{3} \cdot \frac{4}{5} \cdot \frac{6}{7} \dots \frac{2n}{2n + 1} \Big). \vspace*{10pt} 
\end{equation}

\noindent
These and other variations of Wallis's formula are collected and proved in the final section of this note.

 \section{\bf Connecting Two Famous Formulas}
\vspace*{4pt}

\bigskip

The following theorem about a sequence of integrals closely connected to the moments of the normal distribution
is the key to understanding the close connection between Wallis's formula and the probability integral.

\bigskip

\begin{theorem}
For each integer $n \geq 0$ define
 
 $$E_n = \int_{0}^{\infty} x^n e^{-x^2} \ dx.  \vspace*{10pt} $$
 \noindent
 {\rm (a)} For all $n \geq 0$ the improper integral $E_n$ converges.

\vspace*{10pt}
 \noindent
{\rm (b)} For all $n \geq 0$, 
$$E_{n+2} = \frac{n+1}{2} E_n.$$

\vspace*{10pt}
\noindent 
{\rm (c)} For all $n \geq 1$, 
 $$E_{2n} = \frac{ 1 \cdot 3 \cdot 5 \cdot 7 \dots (2n-1)}{2^n} E_0. \vspace*{10pt} $$
 \noindent
{\rm (d)} For all $n \geq 0$,  
$$E_{2n+1} = \frac{n!}{2}. \vspace*{10pt} $$

 \noindent
{\rm (e)} For all $n \geq 1$, \  $E_n^2 \leq E_{n+1}E_{n-1}$. 
 
 \end{theorem}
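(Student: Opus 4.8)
The plan is to recognize statement (e) as a log-convexity statement for the moment sequence $\{E_n\}$, which is exactly the kind of inequality delivered by the Cauchy--Schwarz inequality for integrals. First I would recall that form of the inequality: for real-valued functions $f$ and $g$ on $[0,\infty)$ for which the relevant integrals converge,
$$\left(\int_0^\infty f(x)g(x)\, dx\right)^2 \leq \left(\int_0^\infty f(x)^2\, dx\right)\left(\int_0^\infty g(x)^2\, dx\right).$$

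The crux of the argument is choosing $f$ and $g$ so that their product reproduces the integrand of $E_n$ while their squares reproduce the integrands of $E_{n-1}$ and $E_{n+1}$. For $n \geq 1$ I would split the powers symmetrically, setting
$$f(x) = x^{(n-1)/2}\, e^{-x^2/2}, \qquad g(x) = x^{(n+1)/2}\, e^{-x^2/2}.$$
Then $f(x)g(x) = x^n e^{-x^2}$, while $f(x)^2 = x^{n-1} e^{-x^2}$ and $g(x)^2 = x^{n+1} e^{-x^2}$, so the three integrals appearing in Cauchy--Schwarz are precisely $E_n$, $E_{n-1}$, and $E_{n+1}$. Substituting and squaring then yields $E_n^2 \leq E_{n-1}E_{n+1}$ directly, with no further manipulation.

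The remaining work is to confirm that the hypotheses are met. Convergence of each of $E_{n-1}$, $E_n$, and $E_{n+1}$ is already guaranteed by part (a) (all three indices are nonnegative when $n \geq 1$), so there is no difficulty in applying the inequality over the improper interval $[0,\infty)$; if one wishes to be scrupulous, one may first invoke Cauchy--Schwarz on $[0,T]$ and then let $T \to \infty$. The hypothesis $n \geq 1$ is exactly what is needed to keep the exponent $(n-1)/2$ nonnegative, so that $f$ is well defined and continuous at the origin. I expect the only genuine care required --- more bookkeeping than obstacle --- to be this passage from the finite-interval version of Cauchy--Schwarz to the improper integral; the algebraic heart of the proof is the single factorization displayed above.
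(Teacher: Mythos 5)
Your Cauchy--Schwarz argument for part (e) is correct: the symmetric splitting $f(x) = x^{(n-1)/2}e^{-x^2/2}$, $g(x) = x^{(n+1)/2}e^{-x^2/2}$ does reproduce the three integrals exactly, and passing from $[0,T]$ to the improper integral is routine given convergence. It is worth noting that this is really the same mathematics as the paper's proof, just with the key inequality outsourced: the paper follows Stieltjes and observes that $E_{n+1} + 2\lambda E_n + \lambda^2 E_{n-1} = \int_0^\infty e^{-x^2}x^{n-1}(x+\lambda)^2\,dx > 0$ for every real $\lambda$, so the discriminant of this quadratic in $\lambda$ is negative --- which is precisely the standard quadratic-form proof of Cauchy--Schwarz, carried out in-line. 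Your version buys brevity by citing the inequality; the paper's version is self-contained and, incidentally, yields the strict inequality $E_n^2 < E_{n+1}E_{n-1}$.

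The genuine gap is that the statement you were asked to prove has five parts, and your proposal proves only one of them. Parts (a)--(d) --- convergence of every $E_n$, the reduction formula $E_{n+2} = \frac{n+1}{2}E_n$, and the closed-form evaluations of $E_{2n}$ and $E_{2n+1}$ --- are nowhere addressed; worse, your argument for (e) explicitly invokes part (a) to justify applying Cauchy--Schwarz over $[0,\infty)$, so the convergence claim cannot simply be presumed. The paper handles these by first computing $E_1 = 1/2$ directly, getting convergence of $E_0$ by comparison with $xe^{-x^2}$ on $[1,\infty)$, and then integrating by parts with $f(x) = e^{-x^2}$, $g(x) = x^{n+1}/(n+1)$ on $[0,b]$ to obtain $\int_0^b x^{n+2}e^{-x^2}\,dx = \frac{n+1}{2}\int_0^b x^n e^{-x^2}\,dx - \frac{1}{2}b^{n+1}e^{-b^2}$; letting $b \to \infty$ simultaneously proves convergence for all $n$ and establishes (b), after which (c) and (d) follow by induction. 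Some argument of this kind (or an equivalent one) must be supplied before your part (e) can stand.
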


 \begin{proof}
 
 First notice that for any real number $b > 0$

$$\int_0^b x \ e^{-x^2} \ dx \ = \  \frac{e^{-x^2}}{-2} \Big]_0^b \  = \  \frac{1}{2} - \frac{e^{-b^2}}{2}.    \vspace*{10pt} $$
Therefore, the improper integral $E_1$ converges to $1/2$.  
For all $x \geq 1$ we clearly have $ 0 \leq e^{-x^2}  \leq x  e^{-x^2}$.  So, by the Comparison Test for improper integrals, the 
probability integral $E_0$ also converges.
As an aside, notice that we cannot conclude that the value of the probability integral is bounded above by
$1/2$ because the inequality $e^{-x^2}  \leq x  e^{-x^2}$ is only true for $x \geq 1$.  

\bigskip

 Now, recall the general formula for integration by parts: 
 
 $$\int_a^b f(x) g'(x) \ dx = f(b)g(b) - f(a)g(a) - \int_a^b f'(x) g(x) \  dx.  \vspace*{10pt} $$
For $n \geq 0$, apply the  integration by parts  formula with 

$$f(x) = e^{-x^2}, \ \  f'(x) = (-2x) e^{-x^2}, \ \ g(x) = \frac{x^{n+1}}{n+1}, \ \ {\rm and} \ \ g'(x) = x^n  \vspace*{10pt} $$
to see that  for all real numbers $b > 0$,

$$ \int_0^b x^n e^{-x^2} \ dx  =  \frac{x^{n+1}}{n+1} e^{-x^2} \Big]_0^b \  - \  \int_0^b (-2x)  \frac{x^{n+1}}{n+1} e^{-x^2} \ dx. \vspace*{10pt} $$
Rearranging this equation gives us

$$\int_0^b  x^{n+2}  e^{-x^2} \ dx = \frac{n+1}{2}  \int_0^b x^n e^{-x^2} \ dx - \Big( \frac{1}{2} \Big) b^{n+1} e^{-b^2}. \vspace*{10pt} $$
Since we know that $E_0$ and $E_1$ converge, the formula above shows that $E_n$ converges for all $n \geq 0$ and that 
we have the reduction formula (b). 
The evaluations of $E_{2n}$ and $E_{2n+1}$ given in formulas (c) and (d) are easily obtained   from the reduction formula  by induction.

\bigskip

Now for the tricky step! For any real number $\lambda$ and all integers $n \geq 1$, 
\begin{align*}
E_{n+1} + 2 \lambda E_n + \lambda^2 E_{n-1} &= \int_0^\infty e^{-x^2} (x^{n+1} + 2 \lambda x^n + \lambda^2 x^{n-1} ) \ dx \\[10 pt]
                                                                                &=  \int_0^\infty e^{-x^2} x^{n-1} (x + \lambda)^2 \ dx \\[10 pt]
                                                                                &> 0.
\end{align*}

\vspace*{5pt}
\noindent
Therefore, the quadratic polynomial $E_{n+1} + 2 \lambda E_n + \lambda^2 E_{n-1}$ in $\lambda$ has no real roots and  its discriminant must be negative. That is, 

$$4\ E_n^2  - 4 \  E_{n+1} \  E_{n-1} < 0.  \vspace*{10pt} $$
Thus, for all integers $n \geq 1$ we have $E_n^2 <  E_{n+1} \ E_{n-1}$ and (e) is proved. This proof of (e) is due to Stieltjes, 1890.
\end{proof}

\bigskip

\begin{theorem} Any proof of  Wallis's formula or the probability integral formula proves both assertions. 
\end{theorem}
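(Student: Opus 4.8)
The plan is to show that parts (c), (d), and (e) of the preceding theorem, taken together, force a single limiting relation connecting the probability integral to the Wallis product \emph{without assuming either formula}; once that relation is in hand, the two formulas become logically interchangeable. Writing $(2n-1)!! = 1 \cdot 3 \cdot 5 \cdots (2n-1)$ and setting
$$W_n = \frac{2^{2n}(n!)^2}{n\,\big((2n-1)!!\big)^2},$$
I first observe that $W_n$ is exactly the square of the quantity whose limit appears in the equivalent form (3). Thus Wallis's formula is precisely the statement $\lim_{n \to \infty} W_n = \pi$, while the probability integral formula is precisely the statement $4E_0^2 = \pi$, since it asserts $E_0 = \sqrt{\pi}/2$. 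The goal is to prove the identity $\lim_{n \to \infty} W_n = 4E_0^2$ unconditionally.

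To do this I would apply the log-convexity inequality (e) at two consecutive parities. Replacing $n$ by $2n$ in (e) gives $E_{2n}^2 \leq E_{2n+1}E_{2n-1}$, and replacing $n$ by $2n+1$ gives $E_{2n+1}^2 \leq E_{2n+2}E_{2n}$. Into each I substitute the explicit evaluations $E_{2n} = \frac{(2n-1)!!}{2^n}E_0$ from (c) and $E_{2n+1} = \frac{n!}{2}$ from (d), together with $E_{2n-1} = \frac{(n-1)!}{2}$. Simplifying the double factorials by means of the identities $n!\,(n-1)! = (n!)^2/n$ and $(2n+1)!! = (2n+1)(2n-1)!!$, the two inequalities collapse into the sandwich
$$4E_0^2 \ \leq\ W_n \ \leq\ \frac{2(2n+1)}{n}\,E_0^2.$$
Since $\frac{2(2n+1)}{n} = 4 + \frac{2}{n} \to 4$, both outer bounds tend to $4E_0^2$, and the squeeze theorem yields $\lim_{n \to \infty} W_n = 4E_0^2$.

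With this identity in place the theorem is immediate. Because $\lim_{n \to \infty} W_n = 4E_0^2$ holds regardless of whether either famous formula is known, the equation $\lim_{n \to \infty} W_n = \pi$ (Wallis) and the equation $4E_0^2 = \pi$ (the probability integral) are equivalent: each holds if and only if the other does. Hence any valid proof of one formula, combined with the preceding theorem, immediately delivers the other.

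I expect the main obstacle to be computational rather than conceptual: carrying the double-factorial bookkeeping through both substitutions so that the stray powers of $2$ and the factorial factors cancel to leave exactly $W_n$ in the middle, and confirming that the two outer bounds genuinely share the common limit $4E_0^2$ rather than differing limits. A secondary point worth stating explicitly is the equivalence of form (3) with the original form (1) of Wallis's formula, which the paper defers to its final section; the argument above reads most cleanly through (3), so I would either cite that equivalence or, if preferred, rephrase the sandwich directly in terms of the product appearing in (1).
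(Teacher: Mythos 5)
Your proposal is correct and follows essentially the same route as the paper: both apply the log-convexity inequality (e) at the two parities $2n$ and $2n+1$, substitute the evaluations from (c) and (d), and squeeze the Wallis product of form (3) (you work with its square $W_n$) between bounds tending to $4E_0^2$, so that Wallis's formula and $E_0 = \sqrt{\pi}/2$ become logically equivalent. The only difference is cosmetic: the paper carries the inequalities in terms of $2E_0$ and the product itself rather than squaring, but the sandwich and the squeeze-theorem conclusion are the same.
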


\begin{proof}
Define $E_n$ as in the previous theorem. For $n \geq 1$ we have 
$E_{2n}^2 < E_{2n +1} E_{2n -1}$. Using the formulas for $E_{2n-1}, E_{2n}$ and $E_{2n+1}$ we see that

$$\frac{ (1 \cdot 3 \cdot 5 \dots (2n -1) )^2}{2^{2n}} \ E_0^2 \  \leq \ \frac{ n! (n-1)!}{4}. $$
Thus, 
$$4 E_0^2 \  \leq \ \frac{ 2^{2n} \  n! \ n! } {n \ (1 \cdot 3 \cdot 5 \dots (2n-1))^2},  \vspace*{10pt} $$
or 
$$2 E_0 \ \leq \  \frac{1}{\sqrt{n}} \Big( \frac{2}{1} \cdot \frac{4}{3} \cdot \frac{6}{5} \dots \frac{2n}{2n-1} \Big). \vspace*{10pt} $$
For $n \geq 0$ we have  $E_{2n+1}^2 < E_{2n +2} E_{2n}$. Therefore,

$$\frac{ n! \ n!}{4} \ \leq \ \Big( \frac{ 1 \cdot 3 \cdot 5 \dots (2n+1)}{2^{n+1}} \Big) \ \Big( \frac{ 1 \cdot 3 \cdot 5 \dots (2n-1)}{2^n} \Big) \  E_0^2,  \vspace*{10pt} $$
or
$$ \frac{ 2^{n+1} \ 2^n \ n! \ n! }{ (1 \cdot 3 \cdot 5  \dots (2n-1))^2 (2n+1)} \leq \ 4 \ E_0^2. $$
Therefore, 

$$\sqrt{ \frac{2}{2n+1} }  \   \Big(    \frac{ n! \ 2^n}{ 1 \cdot 3 \cdot 5  \dots (2n-1)} \Big) \leq 2 \ E_0, \vspace*{10pt} $$
or
$$\sqrt{ \frac{2n}{2n+1} }  \ \frac{1}{\sqrt{n}} \ \Big( \frac{2}{1} \cdot \frac{4}{3} \cdot \frac{6}{5} \dots \frac{2n}{2n - 1} \Big) \leq 2 \ E_0. \vspace*{10pt} $$
This last inequality can also be rewritten as:

$$ \frac{1}{\sqrt{n}} \ \Big( \frac{2}{1} \cdot \frac{4}{3} \cdot \frac{6}{5} \dots \frac{2n}{2n - 1} \Big) \leq 2 \ E_0 \sqrt{ \frac{2n+1}{2n}}. \vspace*{10pt} $$
Thus, we have now shown that

$$ \sqrt{ \frac{2n}{2n+1} }  \ \frac{1}{\sqrt{n}} \ \Big( \frac{2}{1} \cdot \frac{4}{3} \cdot \frac{6}{5} \dots \frac{2n}{2n - 1} \Big) \  \leq 2 \ E_0 \
\leq  \frac{1}{\sqrt{n}} \Big( \frac{2}{1} \cdot \frac{4}{3} \cdot \frac{6}{5} \dots \frac{2n}{2n-1} \Big)  \vspace*{10pt} $$
and that 
$$2 E_0 \ \leq \frac{1}{\sqrt{n}} \ \Big( \frac{2}{1} \cdot \frac{4}{3} \cdot \frac{6}{5} \dots \frac{2n}{2n - 1} \Big) \leq 2 \ E_0 \ \sqrt{ \frac{2n+1}{2n}}. \vspace*{10pt} $$
By two applications of the squeeze theorem for limits we see that assertion (3)  is true if and only if the probability integral $E_0$ has the value $\sqrt{\pi} / 2$.
\end{proof}

\section{\bf Wallis Integrals}

 The integrals discussed in this section are remarkable for their simplicity and their power.
The properties of these integrals and the relationships between them are the basis of proofs for
Wallis's formula and the probability integral formula.

\bigskip

 \begin{theorem}
 For each integer $n \geq 0$ define
 
 $$I_n = \int_{0}^{\pi / 2} \cos^n x \ dx.$$
 \noindent
 {\rm (a)} For all $n \geq 2$, 
 $$I_n = \frac{n-1}{n} I_{n-2}.$$
 
 \noindent
{\rm (b)} For all $n \geq 0$,  \
$$n \ I_n I_{n-1} = \frac{\pi}{2}.$$

 \noindent
{\rm  (c) } For all $n \geq 1$, 
 $$I_{2n} = \frac{ \pi }{2} \cdot  \frac{1}{2} \cdot \frac{3}{4} \cdot \frac{5}{6}  \dots  \frac{2n-1}{2n}. \vspace*{10pt} $$
 \noindent
{\rm (d)} For all $n \geq 1$, 
 $$I_{2n+1} = 1 \cdot \frac{2}{3} \cdot \frac{4}{5} \cdot \frac{6}{7} \cdot \frac{8}{9}  \dots  \frac{2n}{2n+1}. \vspace*{10pt} $$
 \noindent
{\rm (e)} For all $n \geq 1$,  
$$I_{n+1} \leq I_n.$$

  \noindent
{\rm (f)} 
$$\lim_{n \rightarrow \infty} \sqrt{n} \  I_n = \sqrt{ \frac{\pi}{2} }. \vspace*{10pt} $$
 \end{theorem}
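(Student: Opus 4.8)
The plan is to reduce the claim to an asymptotic statement about the ratio of consecutive Wallis integrals and then exploit the product identity (b). Since $I_n > 0$ for every $n$, the assertion $\sqrt{n}\,I_n \to \sqrt{\pi/2}$ is equivalent, after squaring, to the statement that $n\,I_n^2 \to \pi/2$; I would work with this reformulation throughout.

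First I would show that $I_n/I_{n-1} \to 1$. Replacing $n$ by $n+1$ in the reduction formula (a) gives $I_{n+1} = \frac{n}{n+1}\,I_{n-1}$, hence $I_{n+1}/I_{n-1} = n/(n+1)$. The monotonicity in (e), applied at consecutive indices, yields $I_{n+1} \leq I_n \leq I_{n-1}$ for $n \geq 2$; dividing through by $I_{n-1} > 0$ gives
$$\frac{n}{n+1} \leq \frac{I_n}{I_{n-1}} \leq 1.$$
Because $n/(n+1) \to 1$, the squeeze theorem forces $I_n/I_{n-1} \to 1$.

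Next I would connect this ratio to the target quantity. Factoring $n\,I_n^2 = \big(n\,I_n I_{n-1}\big)\cdot \frac{I_n}{I_{n-1}}$ and substituting the value $n\,I_n I_{n-1} = \pi/2$ from (b), I obtain
$$n\,I_n^2 = \frac{\pi}{2}\cdot \frac{I_n}{I_{n-1}}.$$
Letting $n \to \infty$ and using the limit just established gives $n\,I_n^2 \to \pi/2$, and taking positive square roots yields (f).

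The step I expect to be the crux is establishing $I_n/I_{n-1} \to 1$: this is where the genuine analytic content lives, since the product identity (b) has already collapsed $n\,I_n I_{n-1}$ to the exact constant $\pi/2$, leaving only the fact that neighboring Wallis integrals become asymptotically equal. Everything after that is purely algebraic bookkeeping on top of the previously proved parts of this theorem.
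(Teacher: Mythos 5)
Your argument for part (f) is correct, and it is essentially the paper's own proof in a slightly different dress: the paper multiplies the inequalities $I_{n+1} \leq I_n \leq I_{n-1}$ through by $n I_n$ and uses (b) to squeeze $n I_n^2$ between $\frac{n}{n+1}\cdot\frac{\pi}{2}$ and $\frac{\pi}{2}$, while you squeeze the ratio $I_n/I_{n-1}$ between $\frac{n}{n+1}$ and $1$ and then multiply by the identity $n\,I_n I_{n-1} = \pi/2$. The bounds are identical and the ingredients --- the reduction formula, the product identity (b), the monotonicity (e), and the squeeze theorem --- are the same; your ratio formulation is simply a clean way to package the computation.

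The genuine gap is one of coverage: the statement to be proved is the whole theorem, parts (a) through (f), but your proposal proves only (f), and it does so by citing (a), (b), and (e) as ``previously proved parts of this theorem.'' Those parts are not previously proved --- they are constituents of the very statement under proof --- and parts (c) and (d) are never mentioned at all, so as written the proposal is conditional rather than complete. The missing pieces are routine and must be supplied: (a) follows from integration by parts with $f(x) = \cos^{n-1} x$ and $g(x) = \sin x$, using $\sin^2 x = 1 - \cos^2 x$ to obtain $n I_n = (n-1) I_{n-2}$; then (b), (c), and (d) follow by induction from (a) together with the base values $I_0 = \pi/2$ and $I_1 = 1$; and (e) follows from the pointwise inequality $0 \leq \cos^{n+1} x \leq \cos^n x$ on $[0, \pi/2]$. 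Once these are in place, your squeeze argument closes the theorem correctly. (One small bookkeeping remark that works in your favor: the identity in (b) as printed can only hold for $n \geq 1$, and that is exactly the range in which you invoke it, so no harm is done there.)
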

 
 \begin{proof}
 Assume that $n \geq 2$ and apply  the integration by parts formula with 
 
$$f(x) = \cos^{n-1} x,   \ \ \   f'(x) = - (n-1) (\cos^{n-2} x)(\sin x), \vspace*{5pt} $$
$$g(x) = \sin x, \ \ {\rm and} \ \  g'(x) = \cos x$$
 to obtain
\begin{align*} 
\int_{0}^{\pi / 2} \cos^n x \ dx  &= (n-1) \int_{0}^{\pi / 2} (\cos^{n-2} x ) (\sin^2 x) \ dx \\
                                                         &= (n-1) \int_{0}^{\pi / 2} (\cos^{n-2} x ) (1- \cos^2 x) \ dx \\
                                                         &= (n-1) \int_{0}^{\pi / 2}  \cos^{n-2} x \ dx - (n-1) \int_{0}^{\pi / 2} \cos^n x \ dx.
\end{align*}  

 \vspace*{5pt}
\noindent
Thus, $n I_n = (n-1) I_{n-2}$ and the reduction formula (a) now follows.
Since $I_0 = \pi / 2$ and $I_1 = 1$, property (b) and the evaluations of $I_{2n}$ and $I_{2n + 1}$ given in (c) and (d) follow immediately
from the reduction formula by induction. 

\bigskip

Notice that for all $x$ such that 
$0 \leq x \leq \pi / 2$ we have $0 \leq \cos x \leq 1$. Therefore,  for all $x$ such that 
$0 \leq x \leq \pi / 2$ and  all 
$n \geq 1$, we have $0 \leq \cos^{n+1} x \leq \cos^n  x $.  The inequality 
$I_{n+1} \leq I_n$ given in (e)  now follows.

\bigskip

For $n \geq 1$,  we have $I_n \leq I_{n-1}$. Multiply through by $n I_n$ to obtain
$$n \ I_n^2 \ \leq \  n \ I_n\ I_{n-1} =  \frac{\pi}{2} .$$
For $n \geq 0$ we have $I_{n+1} \leq I_n$. Multiply through by $n I_n$ to obtain

$$n \ I_{n+1} \ I_n \leq n \  I_n^2.$$
Now, using property (b) we have
$$ n \ I_{n+1} \ I_n = \Big( \frac{n}{n+1}\Big)  \  (n+1) \ I_{n+1} \ I_n = \Big( \frac{n}{n+1} \Big) \  \frac{\pi}{2}.$$
Therefore,
$$\Big( \frac{n}{n+1} \Big) \  \frac{\pi}{2} \  \leq \ n \ I_n^2 \  \leq \frac{\pi}{2} . \vspace*{10pt} $$
Taking square roots and applying the squeezing theorem for limits proves (f).
\end{proof}

\bigskip

The integrals in the preceding theorem are called Wallis integrals since they lead to the following  easy 
proof of Wallis's formula.

\bigskip

\begin{theorem} (Wallis's Formula)
$$ \lim_{n \rightarrow \infty} \  \frac{2}{1} \cdot \frac{2}{3} \cdot \frac{4}{3}  \cdot \frac{4}{5}  \cdot  \frac{6}{5}  \cdot \frac{6}{7}
 \dots  \frac{2n}{2n - 1} \cdot \frac{2n}{2n + 1} = \frac{ \pi }{2}.   \vspace*{10pt} $$
\end{theorem}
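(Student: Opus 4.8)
The plan is to recognize the Wallis product as a quotient of two consecutive Wallis integrals and then to pin down the limit of that quotient by a squeeze. Writing $W_n$ for the partial product on the left-hand side, I would first regroup its factors so that each even numerator is matched against its two odd neighbors, giving
$$W_n = \frac{2}{1} \cdot \frac{2}{3} \cdot \frac{4}{3} \cdot \frac{4}{5} \dots \frac{2n}{2n-1} \cdot \frac{2n}{2n+1} = \prod_{k=1}^{n} \frac{(2k)^2}{(2k-1)(2k+1)}.$$

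Next I would invoke the explicit evaluations (c) and (d) from the Wallis integral theorem. Dividing $I_{2n+1} = \prod_{k=1}^{n}\frac{2k}{2k+1}$ by $I_{2n} = \frac{\pi}{2}\prod_{k=1}^{n}\frac{2k-1}{2k}$ and cancelling yields
$$\frac{I_{2n+1}}{I_{2n}} = \frac{2}{\pi} \prod_{k=1}^{n} \frac{(2k)^2}{(2k-1)(2k+1)} = \frac{2}{\pi}\, W_n,$$
so that $W_n = \frac{\pi}{2}\cdot \frac{I_{2n+1}}{I_{2n}}$. This reduces Wallis's formula to the single clean assertion $\lim_{n\to\infty} \frac{I_{2n+1}}{I_{2n}} = 1$.

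The real work is establishing that last limit, and here I would lean on the monotonicity (e) together with the reduction formula (a). The chain $I_{2n+1} \leq I_{2n} \leq I_{2n-1}$ immediately gives $\frac{I_{2n+1}}{I_{2n}} \leq 1$, while writing $\frac{I_{2n+1}}{I_{2n}} = \frac{I_{2n+1}}{I_{2n-1}}\cdot \frac{I_{2n-1}}{I_{2n}}$ and using (a) to replace $\frac{I_{2n+1}}{I_{2n-1}}$ by $\frac{2n}{2n+1}$, together with $\frac{I_{2n-1}}{I_{2n}} \geq 1$, produces the lower bound $\frac{2n}{2n+1} \leq \frac{I_{2n+1}}{I_{2n}}$. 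Thus
$$\frac{2n}{2n+1} \leq \frac{I_{2n+1}}{I_{2n}} \leq 1,$$
and since $\frac{2n}{2n+1} \to 1$, the squeeze theorem forces $\frac{I_{2n+1}}{I_{2n}} \to 1$ and hence $W_n \to \frac{\pi}{2}$.

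I expect the one genuinely delicate point to be the bookkeeping in the product identity: it is easy to be off by a factor or to misalign the odd terms, so I would carefully verify the cancellation that yields $W_n = \frac{\pi}{2}\cdot I_{2n+1}/I_{2n}$. As an alternative that bypasses the squeeze above, the asymptotic (f) delivers the ratio directly, since $\sqrt{2n+1}\,I_{2n+1}$ and $\sqrt{2n}\,I_{2n}$ both tend to $\sqrt{\pi/2}$ while $\sqrt{2n}/\sqrt{2n+1}\to 1$; but the monotonicity route is more elementary and self-contained, so I would present that as the main argument.
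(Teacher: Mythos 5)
Your proposal is correct and is essentially the paper's own argument: the paper squeezes the partial product $W_n$ between $\frac{2n}{2n+1}\cdot\frac{\pi}{2}$ and $\frac{\pi}{2}$ using exactly the same ingredients you use, namely the monotonicity $I_{2n+1} \leq I_{2n} \leq I_{2n-1}$ together with the evaluations (c) and (d) and the squeeze theorem. Your ratio formulation $W_n = \frac{\pi}{2}\, I_{2n+1}/I_{2n}$ with $\frac{2n}{2n+1} \leq I_{2n+1}/I_{2n} \leq 1$ is the paper's squeeze divided through by $\pi/2$, so the two proofs coincide up to algebraic packaging.
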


\begin{proof} Let $I_n$ be the integral defined in the previous theorem. Then, for all $n \geq 1$, the inequality  $I_{2n + 1} \leq I_{2n}$ and the 
formulas for $I_{2n}$ and  $I_{2n+1}$ imply that

$$\Big( \frac{2}{3}  \cdot \frac{4}{5} \cdot  \frac{6}{7} \cdot  \frac{8}{9} \dots \frac{2n}{2n + 1} \Big) \ \leq \
 \frac{\pi}{2} \ \Big(\frac{1}{2}  \cdot \frac{3}{4} \cdot  \frac{5}{6} \cdot   \frac{7}{8}   \dots   \frac{2n - 1}{2n}\Big).  \vspace*{10pt} $$
 Equivalently, 
$$\Big( \frac{2}{1} \cdot \frac{2}{3} \cdot \frac{4}{3}  \cdot \frac{4}{5}  \cdot  \frac{6}{5}  \cdot \frac{6}{7}
 \dots  \frac{2n}{2n - 1} \cdot \frac{2n}{2n + 1} \Big) \leq \frac{ \pi }{2}.   \vspace*{10pt} $$
For all integers $n \geq 1$ we also have $I_{2n} \leq I_{2n-1}$ and so

$$ \frac{\pi}{2} \ \Big( \frac{1}{2}  \cdot \frac{3}{4} \cdot  \frac{5}{6} \cdot   \frac{7}{8}   \dots   \frac{2n - 1}{2n} \Big)    \leq
\Big( \frac{2}{3}  \cdot \frac{4}{5} \cdot  \frac{6}{7} \cdot  \frac{8}{9} \dots \frac{2n-2}{2n - 1} \Big).  \vspace*{10pt} $$
Equivalently, 
$$ \Big( \frac{2n}{2n+1} \Big) \frac{\pi}{2} \leq \Big( \frac{2}{1} \cdot \frac{2}{3} \cdot \frac{4}{3}  \cdot \frac{4}{5}  \cdot  \frac{6}{5}  \cdot \frac{6}{7}
 \dots  \frac{2n}{2n - 1} \cdot \frac{2n}{2n + 1} \Big). $$ 
Therefore, 

$$\Big( \frac{2n}{2n+1}  \Big)  \frac{\pi}{2} \leq \Big( \frac{2}{1} \cdot \frac{2}{3} \cdot \frac{4}{3}  \cdot \frac{4}{5}  \cdot  \frac{6}{5}  \cdot \frac{6}{7}
 \dots  \frac{2n}{2n - 1} \cdot \frac{2n}{2n + 1} \Big) \leq \frac{\pi}{2}. \vspace*{10pt} $$
By the squeezing theorem for limits the proof is complete. 
\end{proof}

\bigskip

Now that we have a proof of Wallis's  formula,  the evaluation of the probability integral (2) follows from Theorem 2.2. The proof of Wallis's
formula given above is  a well-known. The Wallis integrals can also be used to prove Wallis's formula as expressed in version (4).

\bigskip

\begin{theorem} (Wallis's Formula)
$$\frac{\sqrt{\pi}}{2} = \lim_{n \rightarrow \infty} \sqrt{n} \ \Big(  \frac{2}{3} \cdot \frac{4}{5} \cdot \frac{6}{7} \dots \frac{2n}{2n + 1} \Big). \vspace*{5pt} $$
\end{theorem}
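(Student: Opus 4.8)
The plan is to recognize the product in the statement as one of the Wallis integrals already studied, and then to deduce the result from part (f) of Theorem 2.3. Indeed, formula (d) of that theorem states
$$I_{2n+1} = 1 \cdot \frac{2}{3} \cdot \frac{4}{5} \cdot \frac{6}{7} \dots \frac{2n}{2n+1} = \frac{2}{3} \cdot \frac{4}{5} \cdot \frac{6}{7} \dots \frac{2n}{2n+1},$$
so the quantity whose limit we must evaluate is exactly $\sqrt{n} \, I_{2n+1}$. This reduces the theorem to a statement about the asymptotics of the odd-indexed Wallis integrals.

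First I would invoke part (f), which asserts that $\lim_{m \to \infty} \sqrt{m} \, I_m = \sqrt{\pi/2}$. Specializing this limit to the subsequence of odd indices $m = 2n+1$ gives
$$\lim_{n \to \infty} \sqrt{2n+1} \, I_{2n+1} = \sqrt{\frac{\pi}{2}}.$$
Next I would bridge the discrepancy between the factor $\sqrt{n}$ appearing in the statement and the factor $\sqrt{2n+1}$ supplied by (f). The natural device is to write
$$\sqrt{n} \, I_{2n+1} = \sqrt{\frac{n}{2n+1}} \cdot \left( \sqrt{2n+1} \, I_{2n+1} \right),$$
and to observe that $\sqrt{n/(2n+1)} \to 1/\sqrt{2}$ as $n \to \infty$.

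Finally, applying the product rule for limits to the two factors above yields
$$\lim_{n \to \infty} \sqrt{n} \, I_{2n+1} = \frac{1}{\sqrt{2}} \cdot \sqrt{\frac{\pi}{2}} = \frac{\sqrt{\pi}}{2},$$
which is precisely the asserted formula. I expect no real obstacle here, as the argument is essentially a bookkeeping exercise built on (f). The only point meriting a moment's care is that a convergent sequence forces convergence of every subsequence to the same limit, so that part (f) may legitimately be evaluated along the odd indices $m = 2n+1$; once that is granted, the remaining manipulation with the ratio $\sqrt{n/(2n+1)}$ and the product rule is entirely routine.
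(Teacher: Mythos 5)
Your proposal is correct and follows essentially the same route as the paper's own proof: both identify the product as $I_{2n+1}$, invoke property (f) along the odd indices, and rescale via the identity $\sqrt{n}\, I_{2n+1} = \sqrt{n/(2n+1)}\cdot\sqrt{2n+1}\, I_{2n+1}$ (the paper writes this factor as $\tfrac{1}{\sqrt{2}}\sqrt{2n/(2n+1)}$, which is the same thing). The only slip is a citation label — the Wallis integrals result is Theorem 3.1 in the paper, not Theorem 2.3 — but the mathematics is identical.
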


\begin{proof}
From property (f) of Theorem 3.1 we have
$$\lim_{n \rightarrow \infty} \sqrt{2n+1} \  I_{2n+1} = \sqrt{ \frac{\pi}{2} }. $$
The theorem now follows easily from the observation that

$$ \sqrt{n} \ \Big(  \frac{2}{3} \cdot \frac{4}{5} \cdot \frac{6}{7} \cdot \frac{8}{9}  \dots  \frac{2n}{2n+1} \Big) = \frac{1}{\sqrt{2}} \  \sqrt{\frac{2n}{2n+1}} \ \sqrt{2n+1} \  I_{2n+1}. $$
\end{proof}

\bigskip

Wallis integrals sometimes  appear in a disguised form. 
The next theorem reveals two of the disguises.

 \bigskip
 
 \begin{theorem}
  {\rm (a)} For all integers $n \geq 1$, 

$$\int_0^{\infty}  \frac{1}{ (1 + x^2)^{n} }  \ dx = I_{2n-2}.  $$

\noindent
 {\rm (b)} For all integers $n \geq 1$, 

$$\int_0^1 (1 - x^2)^n \ dx = I_{2n +1}. \vspace*{5pt}$$
 \end{theorem}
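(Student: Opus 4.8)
The plan is to evaluate each integral by a single trigonometric substitution that converts it directly into a Wallis integral $I_k = \int_0^{\pi/2} \cos^k x \, dx$; no induction or appeal to the reduction formula of Theorem 3.1 is needed.

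For part (a), I would substitute $x = \tan\theta$, so that $dx = \sec^2\theta \, d\theta$ and $1 + x^2 = \sec^2\theta$. As $x$ runs from $0$ to $\infty$, the angle $\theta$ runs from $0$ to $\pi/2$. The factor $(1+x^2)^{-n}$ becomes $\cos^{2n}\theta$, and after multiplying by the $\sec^2\theta$ from the differential, two powers of cosine cancel, leaving $\cos^{2n-2}\theta$. Thus the integral equals $\int_0^{\pi/2} \cos^{2n-2}\theta \, d\theta = I_{2n-2}$, as claimed.

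For part (b), I would substitute $x = \sin\theta$, so that $dx = \cos\theta \, d\theta$ and $1 - x^2 = \cos^2\theta$. As $x$ runs from $0$ to $1$, the angle $\theta$ runs from $0$ to $\pi/2$. Here $(1-x^2)^n$ becomes $\cos^{2n}\theta$, and the extra $\cos\theta$ from the differential raises the power by one, giving $\int_0^{\pi/2} \cos^{2n+1}\theta \, d\theta = I_{2n+1}$.

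Neither computation presents a real obstacle; both are clean changes of variable. The one point deserving a word of care is part (a), where the original integral is improper: the substitution sends the infinite upper limit to the finite value $\pi/2$, so I would confirm that the transformed integrand $\cos^{2n-2}\theta$ is continuous on $[0,\pi/2]$ and hence that the new integral is proper, legitimizing the change of variable. The only other place a slip could occur is in bookkeeping the exponents — checking that the $\sec^2\theta$ factor lowers $2n$ to $2n-2$ in (a) while the $\cos\theta$ factor raises $2n$ to $2n+1$ in (b) — so that the final Wallis index matches the statement exactly.
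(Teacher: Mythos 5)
Your proposal is correct and takes essentially the same approach as the paper: the paper proves (a) with the substitution $\phi(x)=\tan x$ and (b) with $\phi(x)=\sin x$, exactly as you do. The one point you flag---legitimizing the change of variable across the improper integral in (a)---is handled in the paper by performing the substitution on a finite interval $[0,r]$ with $0<r<\pi/2$ (so the limits are $[0,\tan r]$ and $[0,r]$) and then letting $r \rightarrow \pi/2$ from the left, which is the precise version of the care you promise.
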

 
 \begin{proof}
First, recall the general formula for substitution. 
Let $\phi \colon [a,b] \rightarrow [c,d]$ be a continuously differentiable function with $\phi (a) = c$ and $\phi (b) = d$ and let $f$ be a 
continuous function on $[c,d]$. Then

$$\int_a^b f( \phi(x) ) \phi'(x) \ dx = \int_c^d f(u) \ du. \vspace*{10pt} $$
Let $r$ be any real number such that $0 < r < \pi / 2$. Define $\phi \colon [0, r] \rightarrow [0, \tan r]$, $\phi (x) = \tan x$. Then $\phi'(x) = \sec^2 x$. Let
$f(u) = 1 / (1 + u^2)^n$. Then

$$\int_0^r \frac{1}{ (1 + \tan^2 x)^{n} } \sec^2 x  \ dx = \int_0^{\tan r}   \frac{1}{ (1 + u^2)^{n} } \ du. \vspace*{10pt} $$
Now recall that $1 + \tan^2 x = \sec^2 x$ and, of course, $\sec x  = 1 / \cos x$. 
Therefore, the equation above can be rewritten as
$$\int_0^{\tan r}   \frac{1}{ (1 + x^2)^{n} } \ dx = \int_0^r \cos^{2n -2} x \ dx.  \vspace*{10pt} $$
Taking the limit as $r$ approaches $ \pi / 2$ from the left proves that  the improper integral in statement (a) of the theorem exists and equals $I_{2n-2}$. 
 
 \bigskip
 
To prove (b), define $\phi \colon [0, \pi / 2] \rightarrow [0, 1]$ by $\phi (x) = \sin x$. Then $\phi'(x) = \cos x$.  For $f(u) = (1 - u^2)^n$ the 
substitution formula yields

$$\int_0^{\frac{\pi}{2}} (1 - \sin^2 x)^n \cos x \ dx = \int_0^1 (1 - u^2)^n \ du. \vspace*{5pt}$$
This equation can be rewritten as 

$$\int_0^1 (1 - x^2)^n \ dx = \int_0^{\frac{\pi}{2}} \cos^{2n + 1} x \ dx = I_{2n+1}. $$
\end{proof}

As an important special case of the  preceding theorem we have:

$$\int_0^\infty \frac{1}{1 + x^2} \ dx = \frac{ \pi }{2}.  \vspace*{5pt} $$
Moreover, Theorem 3.1 and Theorem 3.4  show that 
for each integer $n \geq 2$, 

$$ \int_0^\infty \frac{1}{ (1 + x^2)^{n} } \ dx = \frac{\pi}{2} \cdot \frac{1}{2} \cdot \frac{3}{4} \dots \frac{2n-3}{2n-2}, \vspace*{10pt} $$
and, for each integer $n \geq 1$, 

$$\int_0^1 (1 - x^2)^n \ dx = \frac{2}{3} \cdot \frac{4}{5} \dots  \frac{2n}{2n + 1}. \vspace*{5pt}$$

\bigskip

There is a proof of the probability integral formula that appears as a series of exercises in the book {\it Calculus} by
Michael Spivak. A variation of this approach is given in the next two theorems.

 \bigskip
 
 \begin{theorem}
 For all integers $n \geq 1$

$$ \int_0^1 (1 - x^2)^n \ dx \  \leq \  \frac{1}{\sqrt{n}} \int_0^{\sqrt{n}} e^{- x^2} \ dx \ \leq \  \int_0^\infty \frac{1}{(1 + x^2)^n} \ dx.  \vspace*{10pt}$$

 \end{theorem}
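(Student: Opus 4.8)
The plan is to bound the Gaussian integrand from both sides using the single elementary inequality $1 + t \le e^{t}$, valid for every real number $t$ (it follows, for instance, by checking that $e^{t} - 1 - t$ attains its minimum value $0$ at $t = 0$). First I would extract the two consequences needed. Replacing $t$ by $-x^2$ yields $1 - x^2 \le e^{-x^2}$, while $1 + x^2 \le e^{x^2}$ gives, upon taking reciprocals of these positive quantities, $e^{-x^2} \le 1/(1+x^2)$. These are the pointwise estimates that drive the entire argument.

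Next I would raise both estimates to the $n$-th power. On the interval $[0,1]$ the quantity $1 - x^2$ is nonnegative, so $(1-x^2)^n \le e^{-nx^2}$ there; this is precisely where the restriction to $[0,1]$ on the left-hand integral originates. Integrating over $[0,1]$ and then substituting $u = \sqrt{n}\,x$ (so that $du = \sqrt{n}\,dx$ and the limits become $0$ and $\sqrt{n}$) converts $\int_0^1 e^{-nx^2}\,dx$ into $\frac{1}{\sqrt{n}} \int_0^{\sqrt{n}} e^{-u^2}\,du$, which establishes the left inequality of the theorem.

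For the right inequality I would use the other estimate, $e^{-nx^2} \le 1/(1+x^2)^n$, which holds for every $x \ge 0$. The same substitution identifies the middle term of the theorem with $\int_0^1 e^{-nx^2}\,dx$; since the integrand $e^{-nx^2}$ is nonnegative, I may enlarge the domain to $[0,\infty)$ to obtain $\int_0^1 e^{-nx^2}\,dx \le \int_0^\infty e^{-nx^2}\,dx$, and then the pointwise bound gives $\int_0^\infty e^{-nx^2}\,dx \le \int_0^\infty (1+x^2)^{-n}\,dx$. Chaining these inequalities completes the proof.

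The argument is essentially a sequence of monotonicity steps once the two pointwise inequalities are in hand, so there is no serious obstacle; the one point demanding care is keeping track of domains. The estimate $(1-x^2)^n \le e^{-nx^2}$ is legitimate only on $[0,1]$, since $1-x^2$ turns negative beyond $x = 1$, whereas $e^{-nx^2} \le (1+x^2)^{-n}$ holds on all of $[0,\infty)$. This asymmetry is exactly what forces the three integrals to be taken over the three different intervals $[0,1]$, $[0,\sqrt{n}]$ (after rescaling), and $[0,\infty)$ that appear in the statement.
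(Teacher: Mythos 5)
Your proof is correct and takes essentially the same approach as the paper's: both rest on the pointwise bounds $1 - x^2 \le e^{-x^2} \le 1/(1+x^2)$ coming from $1 + t \le e^{t}$, raised to the $n$-th power and combined with the substitution $u = \sqrt{n}\,x$. The only cosmetic differences are that the paper obtains $1 - x \le e^{-x}$ by integrating $e^{-t} \le 1$ rather than by putting $t = -x^2$ in the exponential inequality, and it applies the bound $e^{-nx^2} \le (1+x^2)^{-n}$ on $[0,1]$ before enlarging the domain to $[0,\infty)$, whereas you enlarge the domain first.
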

 
 \begin{proof}
For all $x \geq 0$,   $1 + x \leq e^x$. Hence,  $1 + x^2 \leq e^{x^2}$ and 
$$e^{- x^2} \leq \frac{1}{1 + x^2}. $$
Therefore, for any  integer $n \geq 1$, 

$$\int_0^1 e^{- n x^2} \ dx \leq \int_0^1 \frac{1}{(1 + x^2)^n} \ dx  \leq  \int_0^\infty \frac{1}{(1 + x^2)^n} \ dx.  \vspace*{10pt}$$
Applying the substitution $u = x \sqrt{n}$  we have

$$\int_0^1 e^{-n x^2} \ dx =   \frac{1}{\sqrt{n}} \int_0^{\sqrt{n}} e^{- u^2} \ du.  \vspace*{10pt}$$
This proves the right hand inequality of the theorem.

\bigskip

For $x \geq 0$ we know that 
$e^{-x} \leq e^0 = 1$, so we have the inequality

$$\int_0^x e^{-t} \ dt \  \leq \  \int_0^x 1 \ dt. \vspace*{10pt} $$
That is,  $1 - e^{-x}  \ \leq x$. Therefore, for all $x \geq 0$,  we have  $1 - x \   \leq \ e^{-x} $. Thus, 
for any $x$ such that $0 \leq x \leq 1$ we have 
$0 \leq 1 - x^2 \   \leq \ e^{-x^2} $. Now, for any integer $n \geq 1$, we have
$$\int_0^1 (1 - x^2)^n \ dx \leq \int_0^1 e^{-nx^2} \ dx =  \frac{1}{\sqrt{n}} \int_0^{\sqrt{n}} e^{- u^2} \ du.  \vspace*{10pt}$$
This proves the left hand inequality of the theorem.
 \end{proof}
 
 \bigskip
 
 \begin{theorem} (Probability Integral)
 $$ \int_0^\infty e^{-x^2} \ dx = \frac{ \sqrt{\pi} }{2}.    \vspace*{10pt} $$
 \end{theorem}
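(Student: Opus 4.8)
The plan is to apply the squeeze theorem to the sandwich inequality of Theorem 3.5, after scaling by $\sqrt{n}$ and reinterpreting both outer bounds as Wallis integrals via Theorem 3.4. Multiplying the three-term inequality of Theorem 3.5 through by $\sqrt{n}$ yields
$$\sqrt{n}\int_0^1 (1-x^2)^n\, dx \ \leq\ \int_0^{\sqrt{n}} e^{-x^2}\, dx \ \leq\ \sqrt{n}\int_0^\infty \frac{dx}{(1+x^2)^n}.$$
The middle term is exactly a partial integral for $E_0$, and since Theorem 2.1(a) guarantees that $E_0 = \int_0^\infty e^{-x^2}\, dx$ converges, this middle term tends to $E_0$ as $n \to \infty$. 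The whole argument therefore reduces to showing that both outer bounds share the common limit $\sqrt{\pi}/2$.

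For this I would invoke Theorem 3.4 to rewrite the bounds as Wallis integrals: the left-hand integral equals $I_{2n+1}$ and the right-hand integral equals $I_{2n-2}$, so the outer bounds become $\sqrt{n}\, I_{2n+1}$ and $\sqrt{n}\, I_{2n-2}$. Each can be handled by Theorem 3.1(f), which asserts $\lim_{m \to \infty}\sqrt{m}\, I_m = \sqrt{\pi/2}$. The only maneuver needed is to insert the correct index: writing
$$\sqrt{n}\, I_{2n+1} = \sqrt{\frac{n}{2n+1}}\cdot \Big(\sqrt{2n+1}\, I_{2n+1}\Big), \qquad \sqrt{n}\, I_{2n-2} = \sqrt{\frac{n}{2n-2}}\cdot \Big(\sqrt{2n-2}\, I_{2n-2}\Big),$$
the bracketed factors each tend to $\sqrt{\pi/2}$ by (f), while the prefactors $\sqrt{n/(2n+1)}$ and $\sqrt{n/(2n-2)}$ both tend to $1/\sqrt{2}$. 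Hence both outer bounds converge to $\frac{1}{\sqrt{2}}\cdot\sqrt{\pi/2} = \sqrt{\pi}/2$.

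With both outer sequences converging to $\sqrt{\pi}/2$ and the middle sequence converging to $E_0$, the squeeze theorem forces $E_0 = \sqrt{\pi}/2$, which is the claimed value of the probability integral. I do not anticipate any serious obstacle here, since every ingredient has already been established. The one point requiring a little care is the bookkeeping with the shifted indices $2n+1$ and $2n-2$ when applying (f): one must confirm that the passage from $\sqrt{m}\, I_m$ to $\sqrt{n}\, I_m$ contributes exactly the factor $1/\sqrt{2}$ in each case and that no stray constant is lost. This mirrors the index manipulation already carried out in the proof of Theorem 3.3.
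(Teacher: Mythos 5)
Your proof is correct and follows essentially the same route as the paper: the sandwich of Theorem 3.5, identification of the outer bounds as $I_{2n+1}$ and $I_{2n-2}$ via Theorem 3.4, and the limit $\sqrt{m}\,I_m \to \sqrt{\pi/2}$ from Theorem 3.1(f) with the same index adjustment. The only cosmetic difference is that you absorb the factor $1/\sqrt{2}$ into the outer bounds (and explicitly invoke the convergence of $E_0$ from Theorem 2.1(a) to pass from the partial integrals to the improper integral), whereas the paper multiplies the middle term by $\sqrt{2}$; these are the same argument.
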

 
 \begin{proof}
For $n \geq 1$ the inequalities of the previous theorem can be expressed in terms of Wallis integrals as: 
$$\sqrt{n} \ I_{2n+1} \leq \int_0^{\sqrt{n}} e^{- x^2} \ dx \ \leq \sqrt{n} \ I_{2n-2}. $$
Therefore, 
$$\sqrt{\frac{2n}{2n+1}} \  \sqrt{2n+1} \  \ I_{2n+1} \leq \sqrt{2} \int_0^{\sqrt{n}} e^{- x^2} \ dx \ \leq \sqrt{\frac{2n}{2n-2}} \  \sqrt{2n-2} \  \ I_{2n-2}. \vspace*{5pt}$$
The theorem now follows from property (f) in Theorem 3.1. 
 \end{proof}
 
\bigskip

\section{\bf Another Approach to the Probability Integral}

There is a very nice approach to computing the probability integral that  requires only 
the most basic theorem on differentiation under the integral sign. This version is due to Borwein and Borwein
and appears as an exercise in the book {\it Irresistible Integrals} by Boros and Moll. First we recall the theorem
on differentiation under the integral sign.

\bigskip

Let
$f(t,x)$ be a real-valued  function defined and continuous on $[a,b] \times [c,d]$ where $a < b$ and $c < d$. 
Assume that the partial derivative  $D_1 f(t,x)$ exists and is continuous on $[a,b] \times [c,d]$.  
For each $t$ in $[a,b]$ let 
$$F(t) = \int_c^d f(t,x) \ dx.  $$
Then $F(t)$ is differentiable on $[a,b]$, with one-sided derivatives at $a$ and $b$, and  for each $t$ in $[a,b]$, 
$$F'(t) = \int_c^d D_1f(t,x) \ dx. \vspace*{5pt} $$
For our purposes define
$$F(t) = \int_0^1 \frac{e^{-t^2 (1 + x^2)}}{1 + x^2} \ dx.$$
It is easy to see that this integral satisfies the conditions of the theorem for all 
$(t,x)$ in any rectangle $[0,r] \times [0,1]$, where $r$ is any positive real number. Therefore, 

$$F'(t) = \int_0^1 (-2t) e^{-t^2 (1 + x^2)} \ dx \ =  \ -2 e^{- t^2} \int_0^1t \ e^{- (tx)^2} \ dx.  \vspace*{10pt} $$
Now carry out the substitution $\phi \colon [0,1] \rightarrow [0,t]$, \  $\phi (x) = tx$, \ $\phi'(x) = t$, with 
$f(u) = e^{-u^2}$ to obtain

$$F'(t) =  -2 e^{- t^2} \int_0^t  e^{- u^2} \ du = -2 e^{- t^2} \int_0^t  e^{- x^2} \ dx.   \vspace*{10pt} $$
Since the real number $r$ is fixed but arbitrary this formula for $F'(t)$ is valid for all $t \geq 0$. 
There is one additional fact about $F(t)$ that we need. For any real numbers $t$ and  $x$, $e^{-t^2(1 + x^2)} \leq e^{-t^2}$. Therefore

$$0 \leq \ F(t) \leq  \ e^{-t^2}  \int_0^1 \frac{1}{1+x^2} \ dx =  e^{-t^2} \  \frac{\pi}{4}.  \vspace*{10pt} $$
Thus, $0 \leq F(t) \leq \pi / 4$  for all $t \geq 0$ and

$$\lim_{t \rightarrow \infty} F(t) = 0.$$

Now define

$$G(t) = \Big( \int_0^t  e^{- x^2} \ dx \Big)^2. \vspace*{10pt} $$
By the Fundamental Theorem of Calculus, 

$$G'(t) = 2 e^{- t^2} \int_0^t  e^{- x^2} \ dx.  \vspace*{10pt} $$
Therefore, for all $t \geq 0$, $F'(t) + G'(t) = 0$ and so $F(t) + G(t)$ is a constant.
Since 
$$F(0) = \int_0^1 \frac{1}{1+x^2} \ dx =  \frac{\pi}{4}  \vspace*{10pt} $$
and $G(0) = 0$ it must be the case that $F(t) + G(t) = \pi / 4$ for all $t \geq 0$. 
Thus, for all $t \geq 0$, 

$$ \int_0^t  e^{- x^2} \ dx = \sqrt{ \frac{\pi}{4} - F(t) }.   \vspace*{10pt} $$
Taking the limit as $t \rightarrow \infty$ we see that the probability integral converges to 
$\sqrt{ \pi} / 2$. 
Notice that by Theorem 2.2 this proof of the probability integral formula  provides another proof of Wallis's formula.

\bigskip

\section{\bf Variations on Wallis's Formula}

\bigskip

There are a number of variations on Wallis' Formula that are very useful in applications. Let
 $\{ a_n \}$ denote the sequence in Wallis' Formula, 

\begin{equation}
a_n = \frac{2}{1} \cdot \frac{2}{3} \cdot \frac{4}{3} \cdot \frac{4}{5} \cdot \frac{6}{5} \cdot \frac{6}{7} \dots  
\frac{2n}{2n - 1} \cdot \frac{2n}{2n +1}. \vspace*{10pt} 
\end{equation}
The terms of this sequence can be rewritten in two ways: 

\begin{equation}
a_n = \Big( \frac{2}{1} \cdot \frac{4}{3}  \cdot \frac{6}{5}  \dots \frac{2n}{2n - 1} \Big)^2  \cdot \frac{1}{2n + 1} 
\end{equation}
and
\begin{equation}
  \ \ a_n = \Big( \frac{2}{3} \cdot \frac{4}{5}  \cdot \frac{6}{7}  \dots \frac{2n}{2n + 1} \Big)^2  \cdot (2n + 1).  
\end{equation}  
\vspace*{5pt} 

\noindent
These alternative formulas for $a_n$ will be useful in the variations that follow.

\bigskip

\noindent
{\bf Variation 1:} Wallis's Formula is equivalent to the assertion: 

$$\sqrt{\pi}  = \lim_{n \rightarrow \infty}  \  \frac{1}{\sqrt{n}} \cdot \Big( \frac{2}{1} \cdot \frac{4}{3} \cdot \frac{6}{5} \dots \frac{2n}{2n - 1} \Big).  \vspace*{10pt} $$

\begin{proof}
Starting with the expression (6) and taking square roots we have

$$\sqrt{a_n} =   \Big( \frac{2}{1} \cdot \frac{4}{3}  \cdot \frac{6}{5}  \dots \frac{2n}{2n - 1} \Big)  \cdot \frac{1}{ \sqrt{2n + 1}}. $$
Therefore,
 $$\ \frac{1}{\sqrt{n}} \cdot    \Big( \frac{2}{1} \cdot \frac{4}{3}  \cdot \frac{6}{5}  \dots \frac{2n}{2n - 1} \Big)  = \sqrt{a_n}  \cdot \sqrt{ \frac{2n+1}{2n} } \cdot \sqrt{2}.  \vspace*{10pt} $$
Hence,  Wallis's formula as expressed in  (1) implies Variation 1. The proof of the converse is similar.
\end{proof}

\bigskip
\noindent
{\bf Variation 2:} Wallis's Formula is equivalent to the assertion: 

$$ \frac{\sqrt{\pi}}{2} = \lim_{n \rightarrow \infty} \sqrt{n} \ \Big(  \frac{2}{3} \cdot \frac{4}{5} \cdot \frac{6}{7} \dots \frac{2n}{2n + 1} \Big). \vspace*{10pt} $$

\begin{proof}
Starting with the expression (7) and taking square roots we have

$$\sqrt{a_n} =   \Big( \frac{2}{3} \cdot \frac{4}{5}  \cdot \frac{6}{7}  \dots \frac{2n}{2n + 1} \Big)  \cdot \sqrt{2n + 1}. $$
Therefore,
$$ \sqrt{n} \ \Big(  \frac{2}{3} \cdot \frac{4}{5} \cdot \frac{6}{7} \dots \frac{2n}{2n + 1} \Big) = \sqrt{a_n}  \cdot  \sqrt{\frac{2n}{2n+1}} \cdot \frac{1}{\sqrt{2}}.  \vspace*{10pt} $$
Hence, Wallis's formula as expressed in  (1) implies Variation 2. The proof of the converse is similar.
\end{proof}

\bigskip
\noindent
{\bf Variation 3:} Wallis's Formula is equivalent to the assertion: 

$$\frac{1}{ \sqrt{\pi}} =  \lim_{n \rightarrow \infty}  \ \sqrt{ n } \cdot \Big( \frac{1}{2} \cdot \frac{3}{4} \cdot \frac{5}{6} \dots \frac{2n-1}{2n} \Big).  \vspace*{10pt} $$

\begin{proof}
This result is an immediate consequence of Variation 1.
\end{proof}

\bigskip
\noindent
{\bf Variation 4:} Wallis's Formula is equivalent to the assertion: 

$$\lim_{n \rightarrow \infty} \Big[ \ \Big( 1 - \frac{1}{2^2} \Big) \Big( 1 - \frac{1}{4^2} \Big) \Big( 1 - \frac{1}{6^2} \Big) \dots \Big( 1 - \frac{1}{ {(2n)}^2} \Big) \ \Big]  = 
\frac{2}{\pi}.   $$

\begin{proof}
The term $a_n$ in formula (5) can be written as

$$a_n =   \prod_{k=1}^n \  \frac{2k}{2k - 1} \cdot \frac{2k}{2k + 1}.    \vspace*{10pt} $$
Now, simply invert this formula and notice  that 

$$\frac{2k -1}{2k} \cdot \frac{2k +1}{2k} = \  \frac{(2k)^2 - 1}{(2k)^2} \  = \ 1 - \frac{1}{(2k)^2}. \vspace*{10pt} $$
The equivalence of Wallis's formula as given in (1) and Variation 4 is now clear.
\end{proof}

\bigskip
\noindent
{\bf Variation 5:} Wallis's Formula is equivalent to the assertion: 

$$ \lim_{n \rightarrow \infty} \  \frac{ (2n)!  \ \sqrt{\pi n} } { \  (n!)^2 \  2^{2n} }= 1.  \vspace*{10pt} $$

\begin{proof}
This limit is simply another way of writing Variation 3. 
\end{proof}

\bigskip

Variation 5 can be rewritten as 
an asymptotic formula for the central probability in the binomial distribution for an even number of trials,

$$\binom{2n}{n} \cdot \frac{1}{2^{2n}} \  \sim \  \frac{1}{\sqrt{\pi n }},   \vspace*{10pt} $$
or as an asymptotic formula for the central binomial coefficient,
\bigskip                 

$$\binom{2n}{n}  \  \sim \  \frac{2^{2n}}{\sqrt{\pi n }}.  \vspace*{10pt} $$

\bigskip
 

\bigskip

September 2019

\bigskip
email: schatzjames.math@gmail.com
                                               
\end{document}